\newtheorem{theorem}{Theorem}
\newtheorem{lemma}[theorem]{Lemma}
\newtheorem{corollary}[theorem]{Corollary}
\newcommand{\bivec}[1]{\accentset{\leftrightarrow}{#1}}
\author
{
Raphael Steiner 
}
\thanks{Department of Computer Science, Institute of Theoretical Computer Science, ETH Z\"{u}rich, Switzerland,  \texttt{raphaelmario.steiner@inf.ethz.ch}. The author was supported by an
ETH Zurich Postdoctoral Fellowship.}
\date{\today}
\title{Subdivisions with congruence constraints in digraphs of large chromatic number}
\begin{document}
\maketitle

\begin{abstract}
We prove that for every digraph $F$ and every assignment of pairs of integers $(r_e,q_e)_{e \in A(F)}$ to its arcs there exists an integer $N$ such that every digraph $D$ with dichromatic number greater than $N$ contains a subdivision of $F$ in which $e$ is subdivided into a directed path of length congruent to $r_e$ modulo $q_e$, for every $e \in A(F)$.

This generalizes to the directed setting the analogous result by Thomassen~\cite{thomassen} for undirected graphs, and at the same time yields a novel short proof of his result.
\end{abstract}

\section{Introduction}

Given a graph $F$, a \emph{subdivision} of $F$ is a graph $H$ consisting of $v(F)$ pairwise distinct so-called \emph{branch-vertices} $(b_f)_{f \in V(F)}$ and a collection of pairwise internally disjoint so-called \emph{subdivision paths} $(P_{e})_{e \in E(F)}$, each of positive length, such that $P_e$ has endpoints $b_u$ and $b_v$ for every edge $e=uv$ in $F$. The exact same definition applies to directed graphs, with the sole difference being that for every arc $e=(u,v)$ in a digraph $F$ we require the corresponding subdivision-path $P_e$ to be directed from $b_u$ to $b_v$.

The study of subdivision containment in undirected an directed graphs has a long history in graph theory. Among the seminal results in the area is a famous theorem of Mader~\cite{mader1}, stating that for every $k \in \mathbb{N}$ there is a number $d(k)$ such that every graph of average degree at least $d(k)$ contains a subdivision of $K_k$ (and hence, of any other graph of order at most $k$). While Mader's bound on $d(k)$ was exponential, the exact order of magnitude of $d(k)$ was later determined to be $d(k)=\Theta(k^2)$ by Bollob\'{a}s and Thomason~\cite{bollobas} and independently by Koml\'{o}s and Szemer\'{e}di~\cite{komlos}. Since every graph $G$ contains a subgraph of minimum degree at least $\chi(G)-1$, the above also implies that chromatic number at least $\Theta(k^2)$ forces the containment of a $K_k$-subdivision. 

A strengthening of Hadwiger's conjecture, the so-called \emph{Haj\'{o}s conjecture}, claimed that in fact every graph of chromatic number at least $k$ contains a $K_k$-subdivision, but this is now known to be false. Explicit counterexamples for $k \ge 7$ were constructed by Catlin~\cite{catlin} and later, Erd\H{o}s and Fajtlowicz~\cite{erdosfajtlowicz} showed that there exist graphs without a $K_k$-subdivision and chromatic number at least $c\frac{k^2}{\log k}$ for some absolute constant $c>0$. 

The above results guarantee that for every fixed graph $F$, any graph $G$ of sufficiently large chromatic number will contain an $F$-subdivision. However, they give no control over the path lengths in this subdivision. In particular, the found subdivision may be a bipartite graph. In contrast to this, we know that every graph of chromatic number at least $3$ contains an odd cycle, and in fact it is even known that graphs of sufficiently large chromatic number contain cycles of length $r$ modulo $q$, for every given pair of integers $r$ and $q$, see for instance~\cite{bollobas1,thomassen,chen}. This naturally raises the question whether additional congruence constraints may be enforced on the path lengths in an $F$-subdivision in graphs of large chromatic number. Related to this, Toft conjectured in 1975 that every graph of chromatic number at least $4$ contains a \emph{totally odd subdivision} of $K_4$, that is, a subdivision with all subdivision paths of odd length. This conjecture was verified in the late 90s independently by Zang~\cite{zang} and Thomassen~\cite{thomassen2}. A qualitatively more powerful result was established by Thomassen already in 1983, as follows.

\begin{theorem}[Thomassen, Theorem~12 in~\cite{thomassen}]\label{thm:thomassen}
Let $F$ be a graph, and for every $e\in E(F)$ let $r_e, q_e \in \mathbb{Z}$ be integers with $q_e \ge 2$. 

Then there exists a number $N \in \mathbb{N}$ such that every graph $G$ with $\chi(G) \ge N$ contains a subdivision of $F$ such that for every $e \in E(F)$, the corresponding subdivision-path is of length congruent to $r_e$ modulo $q_e$.
\end{theorem}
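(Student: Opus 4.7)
I would begin by setting $Q := \mathrm{lcm}_{e \in E(F)}(q_e)$ and replacing each constraint ``length $\equiv r_e \pmod{q_e}$'' by the stronger constraint ``length $\equiv s_e \pmod{Q}$'' for some fixed integer lift $s_e$ of $r_e$. It then suffices to prove the theorem under the convention that $q_e = Q$ is the same modulus for every edge.

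\textbf{Strengthened inductive statement.} A naive induction on $|E(F)|$ that peels off a single edge $e^* = u^*v^*$ and applies the hypothesis to $F - e^*$ fails, because the subdivision-path one then wants to use for $e^*$ may be forced to traverse internal vertices of the subdivision already built. I would therefore prove, by induction on $|E(F)|$, the following stronger statement: for every $F$, every residue pattern $(s_e)$ modulo $Q$, and every $k \in \NN$, there exists $N$ such that every graph $G$ with $\chi(G) \ge N$ contains a subgraph $H$ with $\chi(H) \ge k$ hosting an $F$-subdivision $S$ with the correct congruences and enjoying the following \emph{reservoir property}: for every pair of branch-vertices $b_u, b_v$ of $S$ and every residue $s \in \ZZ/Q$, there are at least $k$ pairwise internally disjoint $b_u$-$b_v$ paths in $H$ of length $\equiv s \pmod{Q}$, each internally disjoint from $S$. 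Given this strengthened form, the inductive step from $F - e^*$ to $F$ reduces to picking one reservoir path of the right residue to serve as the subdivision-path of $e^*$ and then verifying that enough of the reservoir survives to handle the remaining edges.

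\textbf{Building the reservoir.} The technical heart of the argument is to construct such reservoirs in graphs of large chromatic number. I would combine two classical ingredients. The first is a BFS-layering trick: in any graph $G$, the BFS-layers $L_0, L_1, \dots$ from a chosen root partition $V(G)$, every edge of $G$ lies inside a single layer or between consecutive layers, and consequently the subgraph induced on $\bigcup_{i \equiv j \pmod{Q}} L_i$ has chromatic number at least $\chi(G)/Q$ for an appropriate $j$, and inside it path-length residues modulo $Q$ are constrained by the change in BFS-depth. The second ingredient is the classical fact (cf.~\cite{bollobas1,thomassen,chen}) that graphs of sufficiently large chromatic number contain cycles of every residue modulo $Q$; splicing such a cycle into a path as a detour shifts the path's length by a prescribed amount modulo $Q$, and so unlocks every residue. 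Iterating these two moves on a nested sequence of increasingly restricted high-chromatic subgraphs produces the required reservoir.

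\textbf{Main obstacle.} The chief difficulty is the bookkeeping needed to close the induction: to preserve simultaneously a large enough chromatic number for extracting the next branch-vertex, a reservoir rich enough in internally disjoint residue-controlled paths to cover all remaining edges of $F$, and internal vertex-disjointness with the subdivision built so far. The resulting bound on $N$ grows very rapidly---tower-type in $|E(F)|$---but this is unproblematic since the theorem is purely qualitative. Finding the right formulation of the reservoir, strong enough to pass from $F - e^*$ to $F$ yet achievable from BFS-layering plus cycles of all residues, is the most delicate part of the argument.
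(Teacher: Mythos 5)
Your proposal pursues a genuinely different strategy than the paper's, and while the ingredients you list (BFS layering, cycles of all residues modulo $Q$, induction on the number of edges) are in the right ballpark, the ``reservoir'' formulation of the inductive statement has a gap that is not easy to close. When you pass from an $(F - e^*)$-subdivision $S'$ with a $k'$-reservoir to an $F$-subdivision $S = S' \cup P_{e^*}$, you must discard from each reservoir of $k'$ internally disjoint paths those that meet the interior of $P_{e^*}$; since the $k'$ paths are pairwise internally disjoint, the number lost is bounded by the number of internal vertices of $P_{e^*}$. But nothing in your construction (BFS layers plus spliced cycles of prescribed residue) bounds the length of $P_{e^*}$ in terms of $F$, $Q$, $k$ alone --- the cycles guaranteed by high chromatic number can be arbitrarily long --- so the surviving reservoir could be empty, and you cannot quantify $k'$ in advance. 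Separately, the claim that the subgraph on $\bigcup_{i \equiv j \pmod Q} L_i$ has path-length residues ``constrained by the change in BFS-depth'' is inaccurate: for $Q \ge 2$ this union contains no inter-layer edges at all, so it is a disjoint union of the $G[L_i]$, inside each of which BFS-depth is constant and path lengths are unconstrained modulo $Q$.

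The paper avoids all of this bookkeeping by a nested-shell argument, which also explains why the naive induction can be salvaged without a reservoir: rather than first finding the $(F-f)$-subdivision and then hoping to route the $f$-path around it, the paper first locates a gluing vertex $x_0$ and a high-chromatic set $Y$ such that (i) $D[Y]$ is large enough to host the $(F-f)$-subdivision entirely inside $Y$ by induction, (ii) every $y \in Y$ can reach $x_0$ by a path meeting $Y$ only at its endpoints, and (iii) from $x_0$ to every $y \in Y$ there are directed paths of every length in some interval $I$ of $q_f$ consecutive integers, each meeting $Y$ only at $y$ (this is Lemma~\ref{lemma:outparity}, using directed BFS layers plus one long cycle to build a short ``ruler'' of $q_f$ consecutive lengths). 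The $f$-path is then taken to be $y_1 \to x_0 \to y_2$ with the residue fixed by choosing the right element of $I$; internal disjointness from the rest of the subdivision is automatic because that subdivision lives entirely inside $Y$ while the new path exits $Y$. The paper also works directly in the directed setting with the dichromatic number, and obtains Theorem~\ref{thm:thomassen} as an immediate corollary via the biorientation $\bivec{G}$; there is no need to reduce to a common modulus $Q$, as each arc carries its own $q_e$. Adopting the outer-shell routing in place of the reservoir would repair your induction.
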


This result largely resolves the question posed above in the positive. We refer to~\cite{alon,kawarabayashi} for some more recent results on subdivisions with congruence-constraints on the path lengths.

\medskip

In this note we prove a generalization of Theorem~\ref{thm:thomassen} to directed graphs. Accordingly, we need to specify a notion of chromatic number for directed graphs. Here, we will stick with the recently popular notion of the \emph{dichromatic number}, which was introduced by Erd\H{o}s~\cite{erdos} and Neumann-Lara~\cite{neumannlara} in 1980. Given a digraph $D$, the \emph{dichromatic number} of $D$ (denote by $\vec{\chi}(D)$) is the smallest integer $k \ge 1$ such that $V(D)$ can be partitioned into $k$ sets $V_1,\ldots,V_k$, each of them inducing an acyclic subdigraph of $D$. 

At first sight, it may not be obvious at all why sufficiently large dichromatic number forces the containment of subdivisions in directed graphs. However, using an elegant argument, this fact was recently established by Aboulker, Cohen, Havet, Lochet, Moura and Thomass\'{e}~\cite{aboulker}, see also~\cite{girao,gishboliner} for related results. 
\begin{theorem}[Aboulker et al.~\cite{aboulker}]\label{thm:aboulker}
For every digraph $F$ there exists a number $N \in \mathbb{N}$ such that every digraph $D$ with $\vec{\chi}(D) \ge N$ contains a subdivision of $F$.
\end{theorem}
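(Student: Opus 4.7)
My plan is to prove Theorem~\ref{thm:aboulker} by induction on $|A(F)|$, after a reduction to the strongly connected case.

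The reduction: if $V_1,\ldots,V_k$ are the strongly connected components of $D$ and each $D[V_i]$ admits an acyclic partition into at most $c$ parts, then combining these yields an acyclic partition of $D$ into at most $c$ parts (any directed cycle in $D$ lies entirely inside some SCC, so no new cycles are created). Hence $\vec{\chi}(D)$ is attained by some SCC $D^{*}$, and I may work inside $D^{*}$; without loss of generality $D$ is strongly connected.

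For the induction on $|A(F)|$: the base case $|A(F)| = 0$ is immediate, since $|V(D)| \geq \vec{\chi}(D) \geq N \geq |V(F)|$ provides enough distinct vertices to serve as branch vertices. For the inductive step, pick an arc $e_0 = (s_0,t_0) \in A(F)$ and set $F' := F \setminus e_0$. With a sufficiently large threshold $N \gg N' := N(F')$, the inductive hypothesis produces a subdivision $H'$ of $F'$ in $D$, with branch vertices $(b_v)_{v \in V(F)}$ and internally disjoint subdivision dipaths $(P_e)_{e \in A(F')}$. The remaining task is to extend $H'$ by a directed $b_{s_0}$-to-$b_{t_0}$ dipath in $D$ of positive length, internally disjoint from $H'$. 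Since $|V(H')|$ is bounded in terms of $F$ and $N'$, the residual digraph $D - (V(H') \setminus \{b_{s_0}, b_{t_0}\})$ retains dichromatic number at least $\vec{\chi}(D) - |V(H')|$, which is still very large; in particular it is far from acyclic and contains many directed cycles.

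The path-extension step is the technical heart of the proof, and the main obstacle. Large residual dichromatic number alone does not yield a directed $b_{s_0}$-to-$b_{t_0}$ dipath in the residual digraph, since the two specified endpoints might end up in distinct strongly connected components of the residual. To get around this, I would \emph{strengthen the induction hypothesis}: instead of merely producing a subdivision $H'$, I would require that $H'$ sits inside a ``reservoir'' strongly connected subdigraph $R \subseteq D$ of large dichromatic number, such that $R - V(H')$ remains strongly connected and still has both $b_{s_0}$ and $b_{t_0}$ in the same SCC. In such a reservoir, the required extension dipath exists by strong connectivity. The delicate point—and in my view the real technical content—is to maintain the reservoir condition through the induction: that is, to show that large dichromatic number of $D$ forces the existence of a strongly connected ``robust'' subdigraph in which any bounded-size subdivision of $F'$ can be embedded so that the residual is still strongly connected. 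This is where I expect the proof to genuinely use properties of $\vec{\chi}$ beyond the basic SCC-reduction, plausibly via a Gallai--Roy type argument providing long directed paths together with back-arcs to preserve strong connectivity in residuals.
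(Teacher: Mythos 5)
Your skeleton is right in two respects: reducing to the strongly connected case is exactly Lemma~\ref{lemma:strong} of the paper, and the overall plan of inducting on $a(F)$ by removing one arc is also how the paper proceeds (in its proof of Theorem~\ref{thm:main}, which subsumes Theorem~\ref{thm:aboulker}). However, you have correctly identified the path-extension step as ``the technical heart,'' and that step is, in your write-up, an unproved intention rather than an argument. The proposed fix --- strengthen the induction to carry a strongly connected ``reservoir'' $R \supseteq H'$ with $R - V(H')$ still strongly connected --- is not established and I do not see how it could be: deleting the $O_F(1)$ vertices of $H'$ from a strongly connected digraph can shatter strong connectivity, and nothing in the induction controls which vertices $H'$ occupies. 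Even granting the reservoir, $b_{s_0}, b_{t_0}$ lie in $H'$ and so are \emph{outside} $R - V(H')$, so you would still need guaranteed entry/exit arcs between the branch vertices and the residual, which is another unverified claim.

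The mechanism the paper (following Aboulker et al.) actually uses is different and cleaner, and it avoids the residual-connectivity problem entirely. It is a \emph{two-level BFS layering} argument. Lemma~\ref{lemma:half} shows that in a strongly connected $D$, some in-BFS layer $L_i^-(v)$ carries at least half the dichromatic number; picking an SCC $X$ of that layer together with the access path from $v$ into $X$ gives Corollary~\ref{cor:connect}: a strongly connected $D[X]$ with $\vec{\chi}(D[X]) \ge \tfrac12\vec{\chi}(D)$ and a distinguished hub $x_0 \in X$ such that \emph{every} $x \in X$ reaches $x_0$ by a directed path meeting $X$ only in $\{x, x_0\}$. A second BFS (out-layers from $x_0$, inside $X$) then isolates a set $Y \subseteq X$ of still-large dichromatic number such that $x_0$ reaches every $y \in Y$ by a path meeting $Y$ only in $y$. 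The inductive subdivision $H'$ of $F - e_0$ is placed inside $D[Y]$, and the missing $b_{s_0}$-to-$b_{t_0}$ dipath is routed $b_{s_0} \to x_0 \to b_{t_0}$: the first leg leaves $X$ immediately (so is disjoint from $Y \setminus \{b_{s_0}\}$), and the second leg stays in the BFS out-layers strictly below $Y$ (so is disjoint from $Y \setminus \{b_{t_0}\}$). No claim about the residual being strongly connected is needed; disjointness from $H'$ is a consequence of the layer structure. To turn your sketch into a proof, this hub-and-layer construction is exactly the missing ingredient you would need to supply in place of the reservoir idea.
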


Our main result stated below is a qualitative strengthening of Theorem~\ref{thm:aboulker}, additionally ensuring the path lengths in the subdivision of $F$ to satisfy given congruences.

\begin{theorem}\label{thm:main}
Let $F$ be a digraph, and for every $e\in A(F)$ let $r_e, q_e \in \mathbb{Z}$ be integers with $q_e \ge 2$. 

Then there exists a number $N \in \mathbb{N}$ such that every digraph $D$ with $\vec{\chi}(D) \ge N$ contains a subdivision of $F$ such that for every $e \in A(F)$, the corresponding subdivision-path is of length congruent to $r_e$ modulo $q_e$.
\end{theorem}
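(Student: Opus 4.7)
My plan is to reduce Theorem~\ref{thm:main} to Theorem~\ref{thm:aboulker} by lifting $D$ to an auxiliary digraph that encodes directed path lengths modulo $q := \prod_{e \in A(F)} q_e$. Consider the \emph{$q$-blowup} $D^{(q)}$ on vertex set $V(D) \times \mathbb{Z}_q$, with an arc $((u,i),(v,i{+}1 \bmod q))$ for every $(u,v) \in A(D)$ and every $i \in \mathbb{Z}_q$. The crucial feature is that a directed walk in $D$ of length $\ell$ lifts to a directed walk in $D^{(q)}$ advancing the $\mathbb{Z}_q$-coordinate by exactly $\ell$, and conversely a directed path in $D^{(q)}$ from $(u,i)$ to $(v,j)$ projects to a walk in $D$ of length $\equiv j-i \pmod{q}$.

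Given this setup, I would invoke Theorem~\ref{thm:aboulker} on $D^{(q)}$ with a template $F^{\star}$ obtained from $F$ by fixing an assignment $v \mapsto i_v \in \mathbb{Z}_q$ with $i_v - i_u \equiv r_e \pmod{q_e}$ for each arc $e = (u,v)$, and placing the branching vertex of $v$ at level $i_v$. When no globally consistent such assignment exists because of congruence obstructions along cycles of $F$, I would inflate $F^{\star}$ by replacing each branching vertex with a small gadget, for instance a short directed cycle through all $q$ of its lifts, so that the Aboulker-type subdivision of $F^{\star}$ still projects to an $F$-subdivision in $D$ with subdivision paths of the required residues. A final cleanup step would handle potential collisions between the projections of distinct lifts, by starting with $\vec{\chi}(D)$ large enough that a few bad vertices can be discarded without destroying the hypothesis.

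The central technical ingredient needed is a \emph{chromatic transfer lemma} asserting that $\vec{\chi}(D^{(q)})$ tends to infinity with $\vec{\chi}(D)$. The natural attempt is to show that any acyclic $k$-coloring $c$ of $D^{(q)}$ induces an acyclic $k^q$-coloring of $D$ via the tupling $c'(v) := (c(v,0),\ldots,c(v,q-1))$: a monochromatic $c'$-cycle in $D$ would lift, after going around $q$ times, to a closed walk in $D^{(q)}$ whose vertex colors cycle through the common tuple, and this produces a monochromatic cycle in $D^{(q)}$ exactly when the tuple is \emph{constant}. The non-constant case is the subtle one, and bidirected cliques already show that $\vec{\chi}(D^{(q)})$ can be as small as $2$ while $\vec{\chi}(D)$ is arbitrarily large, so the naive tupling argument alone does not suffice.

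Resolving this is the main obstacle, and I expect the proof to engineer the auxiliary digraph more carefully. A plausible rescue is to combine the blowup with the BFS-level partitioning that is already present in the proof of Theorem~\ref{thm:aboulker}, in order to suppress the problematic non-constant-tuple configurations; alternatively, one can split into cases depending on whether $D$ already contains directed cycles in every residue class modulo $q$, in which case any $F$-subdivision produced by Theorem~\ref{thm:aboulker} can be adjusted to the correct residues by attaching short residue-correcting cycles to its subdivision paths.
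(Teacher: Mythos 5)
Your central reduction---passing to the $q$-blowup $D^{(q)}$ and invoking Theorem~\ref{thm:aboulker}---does not go through, and you correctly identify why yourself: the transfer lemma $\vec{\chi}(D^{(q)}) \to \infty$ as $\vec{\chi}(D) \to \infty$ is false. Already for $D$ a bidirected clique one has $\vec{\chi}(D^{(q)}) \le 2$: color $(v,i)$ by whether $i=0$; no directed cycle of $D^{(q)}$ can avoid level $0$, and level $0$ is an independent set. This is not a minor wrinkle but a structural failure, since the blowup aligns path length with the cyclic level coordinate, which is precisely what collapses dichromatic number. The two rescues you sketch are left at the level of heuristics and do not fill the gap. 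Combining the blowup with BFS layering is not formulated as a lemma one could actually prove or apply, and attaching ``residue-correcting cycles'' to the subdivision paths of an $F$-subdivision obtained from Theorem~\ref{thm:aboulker} faces two unaddressed obstacles: you must guarantee that cycles of all needed residues exist vertex-disjointly from the subdivision and can be spliced into the subdivision paths without creating intersections, and you must do so simultaneously for all arcs of $F$. Nothing in the proposal establishes either.

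For comparison, the paper's proof uses no lifting construction at all; it proceeds by induction on $a(F)$. Using the BFS-layering tools of Aboulker et al.\ (Lemma~\ref{lemma:strong} and Lemma~\ref{lemma:half}) twice, one first extracts a strongly connected $X$ with root $x_0$ such that every $x\in X$ reaches $x_0$ by an $X$-avoiding path (Corollary~\ref{cor:connect}), and then, inside $D[X]$, a subset $Y$ of still-large dichromatic number together with an \emph{interval} $I$ of $q_f$ consecutive integers such that $x_0$ reaches every $y\in Y$ by a path of each length $\ell\in I$ that meets $Y$ only in $y$ (Lemma~\ref{lemma:outparity}). The subdivision of $F-f$ is embedded in $D[Y]$ by induction, and the missing arc $f$ is routed as a $y_1\to x_0\to y_2$ path, with the length of the $x_0\to y_2$ leg chosen within $I$ to realize the residue $r_f \pmod{q_f}$. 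The key idea absent from your approach is this: rather than engineering an auxiliary digraph whose subdivisions automatically have the right residues, one shows directly that digraphs of large dichromatic number contain a rooted family of paths whose achievable lengths form an interval of length $q_f$, after which residue selection is immediate.
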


Theorem~\ref{thm:main} is the natural analogue of Thomassen's Theorem~\ref{thm:thomassen} for directed graphs. In fact, it also easily implies Thomassen's theorem, which can be seen as follows:

Let $F$ be any given graph equipped with integer pairs $(r_e,q_e)_{e \in E(F)}$. Fix an orientation $\vec{F}$ of $F$. Then by Theorem~\ref{thm:main} there exists $N \in \mathbb{N}$ such that any digraph $D$ with $\vec{\chi}(D) \ge N$ contains a subdivision of $\vec{F}$ in which every oriented edge $e$ in $\vec{F}$ is replaced by a directed path of length congruent to $r_e$ modulo $q_e$. Now, let $G$ be any given undirected graph with $\chi(G) \ge N$, and let $\bivec{G}$ denote the directed graph on the same vertex-set as $G$ with arc-set $\{(u,v),(v,u)|uv \in E(G)\}$. Pause to note that $\vec{\chi}(\bivec{G})=\chi(G) \ge N$. But then by the above $\bivec{G}$ must contain a subdivision of $\vec{F}$ with path-lengths satisfying the congruence-constraints given by the sequence $(r_e,q_e)_{e \in A(\vec{F})}$. This translates (by ignoring the directions of edges) one-to-one to a subdivision of $F$ in $G$ in which every edge $e$ is replaced by a path of length $r_e$ modulo $q_e$. 

We may conclude that Theorem~\ref{thm:thomassen} is a special case of Theorem~\ref{thm:main}. In addition, our short proof of Theorem~\ref{thm:main} is conceptually entirely different from Thomassen's proof of Theorem~\ref{thm:thomassen} in~\cite{thomassen}, which may be interesting in its own right. 

As a last point, let us mention that by considering the special case when $F$ is a directed cycle of length two, the proof of Theorem~\ref{thm:main} also implies that there exists a (reasonably small) absolute constant $C>0$ such that every digraph of dichromatic number greater than $Cq$ contains a directed cycle of length congruent to $r$ modulo $q$, for every integer $r$. This resembles the main result of Chen, Ma and Zang in~\cite{chen}, who proved this statement with the optimal constant $C=1$.

\medskip

\paragraph{\textbf{Terminology and notation.}} All graphs and directed graphs considered in this paper are simple, i.e., have no loops or parallel edges. For a graph $G$ we denote by $V(G), E(G)$ the set of vertices and edges. For a digraph $D$ we denote by $V(D)$ the vertex-set and by $A(D) \subseteq \{(u,v)\in V(D) \times V(D)|u \neq v\}$ the arc-set. We also use the notation $a(F):=|A(F)|$ for the number of arcs. For $X \subseteq V(D)$ we denote by $D[X]$ the subdigraph of $D$ consisting of the vertex-set $X$ and all arcs of $D$ going between vertices of $X$. Given a directed path $P$ or a directed cycle $C$ in $D$, we denote by $\ell(P)$ resp. $\ell(C)$ its length. Paths are allowed to consist of a single vertex (i.e., have length zero). We say that $D$ is strongly connected if for every $(x,y) \in V(D)^2$ there exists a directed path from $x$ to $y$ in $D$. A \emph{strongly connected component} of $D$ is a maximal subset $X$ of vertices such that $D[X]$ is strongly connected. For vertices $x,y$ in a strongly connected digraph $D$ we denote by $\text{dist}_D(x,y)$ the length of a shortest directed path from $x$ to $y$.

\section{Proof of Theorem~\ref{thm:main}}
The proof of Theorem~\ref{thm:aboulker} by Aboulker et al.~was based on the insight that BFS layerings in directed graphs can be used to find vertex-disjoint directed paths for a subdivision. Also our proof of Theorem~\ref{thm:main} will rely crucially on the BFS layering idea. In particular, we need Lemma~\ref{lemma:strong} and Lemma~\ref{lemma:half} stated below which both appear (in equivalent form) in~\cite{aboulker}.

\begin{lemma}[cf.~\cite{aboulker}, Lemma~29]\label{lemma:strong}
The dichromatic number of a digraph $D$ equals the maximum of the dichromatic numbers of the subdigraphs induced by its strongly connected components.
\end{lemma}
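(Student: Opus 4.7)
The plan is to prove both inequalities between $\vec{\chi}(D)$ and $M := \max_C \vec{\chi}(D[C])$, where the maximum ranges over the strongly connected components $C$ of $D$.

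For the inequality $M \le \vec{\chi}(D)$, I would simply observe that each strongly connected component $C$ induces a subdigraph $D[C]$ of $D$, and that any partition of $V(D)$ into acyclic sets restricts to a partition of $V(C)$ into acyclic sets (acyclicity is preserved under taking induced subdigraphs). This immediately gives $\vec{\chi}(D[C]) \le \vec{\chi}(D)$ for every strongly connected component $C$, and hence $M \le \vec{\chi}(D)$.

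The main direction is $\vec{\chi}(D) \le M$. The key structural fact I would exploit is that every directed cycle of $D$ lies entirely inside some strongly connected component, since any two vertices on a common directed cycle reach each other by directed paths within that cycle. With this in mind, I would for each strongly connected component $C$ fix an optimal acyclic partition $V(C) = V_1^C \cup \cdots \cup V_M^C$ into at most $M$ parts (padding with empty sets if $\vec{\chi}(D[C]) < M$). I would then merge these across components by setting $V_i := \bigcup_C V_i^C$ for $i=1,\dots,M$, which is a partition of $V(D)$ since the strongly connected components partition $V(D)$.

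It remains to verify that each $D[V_i]$ is acyclic. Here I expect no genuine obstacle: any directed cycle in $D[V_i]$ would be a directed cycle in $D$, hence contained in some strongly connected component $C$, hence entirely contained in $V_i \cap V(C) = V_i^C$; but $D[V_i^C]$ was chosen to be acyclic, a contradiction. Thus $\vec{\chi}(D) \le M$ and the two inequalities together yield equality. The argument is essentially a one-paragraph observation; the only subtlety to mention clearly is the fact that directed cycles do not cross between distinct strongly connected components, which underlies why the naive ``colour each component optimally and glue'' strategy works.
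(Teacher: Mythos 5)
Your proof is correct. The paper itself does not prove this lemma but simply cites it from Aboulker et al.\ (Lemma~29 in~\cite{aboulker}); your argument is the standard one, resting on the observation that every directed cycle is confined to a single strongly connected component, so optimal acyclic colourings of the components can be glued into an acyclic colouring of the whole digraph with the same number of colours.
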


\begin{lemma}[cf.~\cite{aboulker}, Lemma~30]\label{lemma:half}
Let $D$ be a strongly connected digraph and let $v \in V(D)$. For every integer $i \ge 0$, let $L_i^+=L_i^+(v,D):=\{x \in V(D)|\text{dist}_D(v,x)=i\}$ and $L_i^-=L_i^-(v,D):=\{x \in V(D)|\text{dist}_D(x,v)=i\}$. 
Then $$\vec{\chi}(D)\le 2\max\{\vec{\chi}(D[L_i^+])|i \ge 0\}$$ and $$\vec{\chi}(D) \le 2\max\{\vec{\chi}(D[L_i^-])|i \ge 0\}.$$
\end{lemma}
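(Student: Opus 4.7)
The plan is to prove the inequality $\vec{\chi}(D) \le 2\max_i \vec{\chi}(D[L_i^+])$; the second inequality then follows by applying the first to the reverse digraph of $D$, which is strongly connected, has the same dichromatic number, and whose out-layers from $v$ are precisely the in-layers $L_i^-$ of $D$. Write $k:=\max_i \vec{\chi}(D[L_i^+])$, and observe that since $D$ is strongly connected the sets $L_0^+, L_1^+, \ldots$ partition $V(D)$. For each $i$ I would fix an acyclic partition $L_i^+=V_{i,1}\cup\cdots\cup V_{i,k}$ witnessing $\vec{\chi}(D[L_i^+])\le k$, and define a coloring $\Phi\colon V(D)\to\{1,\ldots,k\}\times\{0,1\}$ by $\Phi(x)=(j,\,i\bmod 2)$ whenever $x\in V_{i,j}$.

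This coloring uses $2k$ colors, so it suffices to verify that each color class induces an acyclic subdigraph of $D$. The key structural observation is that for any arc $(x,y)$ of $D$ with $x\in L_i^+$ and $y\in L_{i'}^+$, we have $i'\le i+1$, since $\text{dist}_D(v,y)\le \text{dist}_D(v,x)+1$. If additionally $\Phi(x)=\Phi(y)$, then $i'\equiv i\pmod 2$, and combined with $i'\le i+1$ this forces $i'\le i$, with the difference $i'-i$ being either $0$ or at most $-2$.

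Now suppose for contradiction that some color class $\Phi^{-1}(j,\epsilon)$ contains a directed cycle $x_0\to x_1\to\cdots\to x_t=x_0$, with $x_m\in L_{i_m}^+$. Summing level differences around the cycle gives $\sum_{m=0}^{t-1}(i_{m+1}-i_m)=0$, while by the previous paragraph every summand is nonpositive; hence every summand must vanish, and all vertices $x_m$ lie in a common layer $L_i^+$. But they also share the first color coordinate $j$, so the cycle is contained in $V_{i,j}$, contradicting the fact that $D[V_{i,j}]$ is acyclic.

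This establishes that $\Phi$ is an acyclic $2k$-coloring, yielding $\vec{\chi}(D)\le 2k$ as desired. I expect no serious obstacle; the only delicate point is the parity shift, which forbids same-colored arcs from moving forward by exactly one layer. Without that parity trick a monochromatic cycle could balance forward-one-layer arcs against arcs jumping several layers back, so the naive argument using only the layering would fail. With the parity in place, every same-colored arc weakly decreases the BFS level, and the standard telescoping around the cycle does the rest.
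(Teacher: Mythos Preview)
Your argument is correct and is essentially the standard proof of this lemma (as given in~\cite{aboulker}); the paper itself does not supply a proof but merely cites the result. The one point worth stating slightly more carefully is the reduction of the second inequality to the first: reversing all arcs of $D$ preserves acyclic vertex-sets and hence $\vec{\chi}$, and turns $L_i^-(v,D)$ into $L_i^+(v,D^{\mathrm{rev}})$, so the first inequality applied to $D^{\mathrm{rev}}$ indeed yields the second for $D$.
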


We note the following simple corollary of the above.

\begin{corollary}\label{cor:connect}
Every digraph $D$ contains a set of vertices $X \subseteq V(D)$ and a vertex $x_0 \in X$ such that $D[X]$ is strongly connected, $\vec{\chi}(D[X]) \ge \frac{1}{2}\vec{\chi}(D)$ and for every $x \in X$ there is a directed path $P$ in $D$ starting at $x$, ending at $x_0$ and such that $V(P) \cap X=\{x,x_0\}$. 
\end{corollary}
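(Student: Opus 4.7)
My plan is to apply Lemma~\ref{lemma:strong} twice, once to $D$ itself and once inside a well-chosen BFS layer, and to combine this with Lemma~\ref{lemma:half} applied to the \emph{reverse} BFS layering. First, by Lemma~\ref{lemma:strong} I may pass to a strongly connected component of maximum dichromatic number, and thus assume from the outset that $D$ is strongly connected. The case $\vec{\chi}(D) \le 2$ is handled by taking $X$ to be any single vertex, so we may further assume $\vec{\chi}(D) \ge 3$.

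Next, I would fix any vertex $v_0 \in V(D)$ and apply the second inequality of Lemma~\ref{lemma:half} to obtain an index $i \ge 0$ with $\vec{\chi}(D[L_i^-(v_0,D)]) \ge \vec{\chi}(D)/2$. Since $L_0^-(v_0,D) = \{v_0\}$ has dichromatic number $1 < \vec{\chi}(D)/2$, necessarily $i \ge 1$. Setting $Y := L_i^-(v_0,D)$ and applying Lemma~\ref{lemma:strong} inside $D[Y]$, I pick a strongly connected component $X$ of $D[Y]$ with $\vec{\chi}(D[X]) = \vec{\chi}(D[Y]) \ge \vec{\chi}(D)/2$. This set $X$, which satisfies the strong-connectivity and dichromatic-number conditions by construction, will be the desired vertex set.

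To complete the proof, I would let $x_0 \in X$ be the endpoint of a shortest directed path $Q$ in $D$ from $v_0$ to the set $X$. By the minimality of $Q$, no internal vertex of $Q$ lies in $X$, and its initial vertex $v_0$ lies outside $X$ because $i \ge 1$. For any $x \in X$, the fact that $x \in L_i^-(v_0,D)$ yields a directed path $P_x$ of length exactly $i$ from $x$ to $v_0$; a straightforward distance argument shows that the $j$-th vertex along $P_x$ (counted from $v_0$) lies in $L_j^-(v_0,D)$, so no internal vertex of $P_x$ belongs to $Y \supseteq X$. Concatenating $P_x$ with $Q$ produces a walk from $x$ to $x_0$ meeting $X$ only at its two endpoints, and extracting any simple $(x,x_0)$-subpath yields the required path (the case $x = x_0$ is covered by the length-$0$ path). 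The main conceptual step is to use the \emph{reverse} layering from $v_0$: this guarantees that shortest paths from $X$ to $v_0$ immediately leave $X$, which is precisely what makes the final concatenation internally disjoint from $X$. Beyond this observation the argument is essentially bookkeeping with layer indices, and I do not anticipate any serious technical obstacle.
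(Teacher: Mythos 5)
Your argument is correct and essentially identical to the paper's own proof: both reduce to the strongly connected case via Lemma~\ref{lemma:strong}, pick a backward BFS layer $L_i^-(v_0,D)$ of large dichromatic number via Lemma~\ref{lemma:half}, take a strong component $X$ of maximum dichromatic number inside it, choose $x_0$ as the endpoint of a shortest $v_0$--$X$ path, and connect each $x\in X$ to $x_0$ by concatenating a shortest $x$--$v_0$ path (which leaves the layer immediately) with that fixed $v_0$--$X$ path. The only cosmetic difference is your explicit case split for $\vec{\chi}(D)\le 2$, which the paper absorbs by allowing $i=0$ and paths of length zero.
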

\begin{proof}
By Lemma~\ref{lemma:strong} we may assume without loss of generality that $D$ is strongly connected. Pick a vertex $v$ arbitrarily and apply Lemma~\ref{lemma:half} to it. In particular, there exists an integer $i \ge 0$ such that $\vec{\chi}(D[L_i^-(v,D)]) \ge \frac{1}{2}\vec{\chi}(D)$. Let $X$ be the set of vertices of a strongly connected component of $D[L_i^-(v,D)]$ with maximum dichromatic number, i.e., $\vec{\chi}(D[X])=\vec{\chi}(D[L_i^-(v,D)]) \ge \frac{1}{2}\vec{\chi}(D)$ by Lemma~\ref{lemma:strong}. Let $P_{v,X}$ be a shortest directed path in $D$ starting at $v$ and ending at $X$. Then $P_{v,X}$ intersects $X$ only in its endpoint, call it $x_0$. 
Now, for any given $x \in X$, consider a shortest directed path $P_{x,v}$ from $x$ to $v$ in $D$ (i.e., of length exactly $i$). Since $X \subseteq L_i^-(v,D)$, we have $V(P_{x,v}) \cap X=\{x\}$.  Hence the union of the directed paths $P_{x,v}$ and $P_{v,X}$ forms a directed walk in $D$ starting at $x$, ending at $x_0$ and such that no internal vertices of it are contained in $X$. Hence, possibly after short-cutting this walk we obtain a directed $x$ to $x_0$-path $P$ in $D$ which intersects $X$ only at its endpoints. This concludes the proof.
\end{proof}

For our next step towards Theorem~\ref{thm:main}, namely Lemma~\ref{lemma:outparity} below, we need the following known statement about long cycles in digraphs of given dichromatic number, compare Corollary~38 in~\cite{aboulker} or Theorem~7 in~\cite{gishboliner}. 

\begin{lemma}\label{lemma:cyclelength}
Let $k \ge 2$ be an integer and let $D$ be a digraph. If $\vec{\chi}(D) \ge k$ then $D$ contains a directed cycle of length at least $k$.
\end{lemma}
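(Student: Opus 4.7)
My plan is to reduce to a ``$k$-critical'' subdigraph and then apply a classical longest-path argument. Specifically, I would pass to a vertex-minimal induced subdigraph $D'$ of $D$ with $\vec{\chi}(D') \ge k$; by Lemma~\ref{lemma:strong} such a $D'$ must be strongly connected, since otherwise one of its strong components would already be a strictly smaller induced subdigraph with the same dichromatic number, contradicting minimality.

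Next I would show that every vertex of $D'$ has out-degree at least $k-1$. This is the standard criticality argument: if some $v \in V(D')$ had out-degree at most $k-2$, then by the minimality of $D'$ there would exist a dichromatic $(k-1)$-coloring of $D'-v$ with color classes $V_1, \ldots, V_{k-1}$. By pigeonhole some class $V_r$ contains no out-neighbor of $v$, and inserting $v$ into $V_r$ preserves acyclicity, because any new directed cycle in $D'[V_r \cup \{v\}]$ would have to leave $v$ along an arc into $V_r$, which by construction does not exist. This would yield a dichromatic $(k-1)$-coloring of $D'$, contradicting $\vec{\chi}(D') \ge k$.

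With the minimum out-degree at least $k-1$ secured, I would take a longest directed path $P = v_1 \to v_2 \to \cdots \to v_n$ in $D'$. Since $P$ cannot be extended, every out-neighbor of $v_n$ lies in $\{v_1, \ldots, v_{n-1}\}$. The bound $|N^+(v_n)| \ge k-1$ then forces $n \ge k$, and moreover the smallest index $i$ with $(v_n, v_i) \in A(D')$ must satisfy $i \le n-k+1$. The directed cycle $v_i \to v_{i+1} \to \cdots \to v_n \to v_i$ therefore has length $n-i+1 \ge k$, yielding the desired object (which is of course also a cycle in the original digraph $D$).

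I do not expect any genuine obstacle. The reduction to a critical subdigraph via Lemma~\ref{lemma:strong}, the pigeonhole argument for the minimum out-degree, and the longest-path extraction are all short and standard. The only mild care needed is in the last step, observing that $N^+(v_n)$ has enough elements inside $\{v_1, \ldots, v_{n-1}\}$ to push the smallest out-neighbor index down to at most $n-k+1$; but this is immediate once we know $|N^+(v_n)| \ge k-1$.
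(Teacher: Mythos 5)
The paper does not prove Lemma~\ref{lemma:cyclelength}; it cites it as known, pointing to Corollary~38 of Aboulker et al.\ and Theorem~7 of Gishboliner et al. Your proof is correct and is the standard self-contained argument: pass to a vertex-minimal induced subdigraph $D'$ with $\vec{\chi}(D')\ge k$; show every vertex of $D'$ has out-degree at least $k-1$ by the usual recoloring/pigeonhole step (take a dichromatic $(k-1)$-coloring of $D'-v$, and place $v$ into a class $V_r$ containing none of its at most $k-2$ out-neighbors — any cycle in $D'[V_r\cup\{v\}]$ would have to leave $v$ into $V_r$, so acyclicity is preserved); and finally run the longest-path argument, where the back-arc from the last vertex $v_n$ to its earliest out-neighbor $v_i$ on the path closes a cycle of length $n-i+1\ge k$. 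All three steps check out, including the index bound $i\le n-k+1$ since $v_n$ has at least $k-1$ out-neighbors among $\{v_1,\dots,v_{n-1}\}$. One minor remark: the detour through Lemma~\ref{lemma:strong} to get strong connectivity of $D'$ is never actually used — neither the out-degree bound nor the longest-path extraction needs it — so that observation can be dropped without loss.
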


\begin{lemma}\label{lemma:outparity}
Let $D$ be a strongly connected digraph and let $x_0 \in V(D)$, $q \in \mathbb{N}$, $q \ge 2$. Suppose that $\vec{\chi}(D) \ge 2(q-1)$. Then there exists a set $Y \subseteq V(D)$ such that $\vec{\chi}(D[Y]) \ge \frac{1}{4}\vec{\chi}(D)-\frac{q-1}{2}$ and an interval $I \subseteq \mathbb{N}$ of $q$ consecutive integers such that the following holds:

For every $y \in Y$ and every $\ell \in I$ there exists a directed path $Q$ of length $\ell$ in $D$ which starts at $x_0$, ends at $y$, and satisfies $V(Q) \cap Y=\{y\}$. 
\end{lemma}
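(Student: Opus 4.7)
The plan is to combine the BFS-layering tool Lemma~\ref{lemma:half} with the long-cycle tool Lemma~\ref{lemma:cyclelength} in two successive phases. A cycle of length at least $q$ contained in a single BFS layer of $x_0$ will supply $q$ ``entry points'' reachable from $x_0$ by paths of a common length $d$, and traversing differing portions of that cycle before exiting to a target $y$ will yield paths of $q$ consecutive lengths; a second BFS from a distinguished cycle vertex, applied to the digraph obtained by deleting the other cycle vertices, will produce the required set $Y$ together with exit paths of a common length.

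First I would apply Lemma~\ref{lemma:half} to the forward BFS layering of $D$ from $x_0$ to obtain a layer $L_d^+(x_0,D)$ with $\vec{\chi}(D[L_d^+(x_0,D)])\ge \vec{\chi}(D)/2$; Lemma~\ref{lemma:strong} then yields a strongly connected component $S\subseteq L_d^+(x_0,D)$ with $\vec{\chi}(D[S])\ge \vec{\chi}(D)/2$. In the non-vacuous regime $\vec{\chi}(D)\ge 2q-1$ (at the boundary $\vec{\chi}(D)=2(q-1)$ the conclusion is trivial with $Y=\emptyset$), this gives $\vec{\chi}(D[S])\ge q$, so Lemma~\ref{lemma:cyclelength} produces a directed cycle of length $\ge q$ in $D[S]$, from which I fix $q$ consecutive cycle vertices $c_0\to c_1\to\cdots\to c_{q-1}$. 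Since $c_0,\ldots,c_{q-1}\in L_d^+(x_0,D)$, each $c_i$ admits a shortest directed path $\pi_i$ from $x_0$ to $c_i$ in $D$ of length $d$ whose interior lies in $L_0^+(x_0,D)\cup\cdots\cup L_{d-1}^+(x_0,D)$, hence is disjoint from $S$ and from $\{c_0,\ldots,c_{q-1}\}$.

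Next, I would set $\tilde{D}:=D[S]-\{c_0,c_1,\ldots,c_{q-2}\}$, which still contains $c_{q-1}$ and satisfies $\vec{\chi}(\tilde{D})\ge \vec{\chi}(D[S])-(q-1)\ge \vec{\chi}(D)/2-(q-1)$. Passing (via Lemma~\ref{lemma:strong}) to a strongly connected component $T$ of $\tilde{D}$ containing $c_{q-1}$ and with $\vec{\chi}(\tilde{D}[T])\ge \vec{\chi}(\tilde{D})$, and then applying Lemma~\ref{lemma:half} forward to $\tilde{D}[T]$ from $c_{q-1}$, I extract a BFS layer $Y:=L_\mu^+(c_{q-1},\tilde{D}[T])$ with $\vec{\chi}(D[Y])\ge \vec{\chi}(\tilde{D}[T])/2 \ge \vec{\chi}(D)/4-(q-1)/2$, matching the required bound. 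For every $y\in Y$ and $i\in\{0,1,\ldots,q-1\}$ the required path of length $d+(q-1-i)+\mu$ is obtained by concatenating $\pi_i$, the cycle arcs $c_i\to c_{i+1}\to\cdots\to c_{q-1}$, and a shortest $\tilde{D}[T]$-path $R_y$ of length $\mu$ from $c_{q-1}$ to $y$. As $i$ ranges through $\{0,\ldots,q-1\}$ these lengths fill the interval $I=\{d+\mu,d+\mu+1,\ldots,d+\mu+q-1\}$. Simplicity of the combined path and the condition $V(Q)\cap Y=\{y\}$ follow from the two BFS-layerings together: the interior of $\pi_i$ lies in $L_{<d}^+(x_0,D)$ and is therefore disjoint from $S\supseteq Y\cup\{c_0,\ldots,c_{q-1}\}$; the vertices $c_0,\ldots,c_{q-2}$ are excluded from $V(\tilde{D})\supseteq V(R_y)$; and the interior of $R_y$ lies in layers of $c_{q-1}$ in $\tilde{D}[T]$ of index strictly less than $\mu$, hence is disjoint from $Y$.

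The step I expect to be the main obstacle is guaranteeing that the anchor $c_{q-1}$ lies in a strongly connected component $T$ of $\tilde{D}$ whose dichromatic number matches $\vec{\chi}(\tilde{D})$: Lemma~\ref{lemma:strong} on its own produces \emph{some} maximum-dichromatic SCC of $\tilde{D}$, which a priori need not contain $c_{q-1}$, and in the worst case the SCC of $\tilde{D}$ containing $c_{q-1}$ might be tiny. I would address this by exploiting the freedom in choosing which $q$ consecutive cycle vertices serve as $c_0,\ldots,c_{q-1}$ (the underlying cycle may have length strictly larger than $q$, providing several candidate anchors that one may try), or alternatively by replacing Lemma~\ref{lemma:strong} at this step with an application of Corollary~\ref{cor:connect} on the reverse of $\tilde{D}$, which produces a strongly connected substructure together with the internally-disjoint connecting paths needed to route from $c_{q-1}$ into $Y$ while continuing to avoid the removed cycle vertices $c_0,\ldots,c_{q-2}$.
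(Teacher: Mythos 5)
Your overall architecture mirrors the paper's: a forward BFS layering from $x_0$ produces a layer $S$ of high dichromatic number, a long directed cycle inside $S$ supplies $q$ ``calibration'' vertices reachable from $x_0$ at a common distance, and a second BFS from an anchor vertex produces the target set $Y$ together with exit paths of common length. The path-length bookkeeping and the disjointness checks you carry out in the final paragraph are correct.

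However, the gap you flag is genuine and your two proposed repairs do not close it. The difficulty is precisely that after deleting $c_0,\dots,c_{q-2}$ from $D[S]$, Lemma~\ref{lemma:strong} only guarantees that \emph{some} strongly connected component of $\tilde D$ has dichromatic number $\ge \vec{\chi}(D[S])-(q-1)$; nothing forces $c_{q-1}$ to lie in it, and the SCC actually containing $c_{q-1}$ may well be a singleton. Shifting the window along the cycle (fix (a)) does not obviously help: the cycle may have length exactly $q$, and even when it is longer there is no pigeonhole argument forcing some rotation to place the surviving vertex in a large component — one can imagine the entire cycle living in a small, loosely attached region of $S$. Fix (b) also fails as stated: applying Corollary~\ref{cor:connect} to the reverse of $\tilde D$ yields a strongly connected set $X'$ and a distinguished vertex $x_0'\in X'$ such that every vertex of $X'$ is reachable \emph{from} $x_0'$ by a path internally disjoint from $X'$. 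This says nothing about reaching $X'$ from $c_{q-1}$: the corollary determines $x_0'$ for you, there is no guarantee $c_{q-1}\in X'$, and there need not even exist a directed path from $c_{q-1}$ to $X'$ in $\tilde D$ (for instance if $c_{q-1}$ lies in a sink component of $\tilde D$).

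The underlying issue is the order in which you commit to the cycle and to the strongly connected core. The paper reverses it: it first extracts an \emph{inclusion-minimal} $Z\subseteq S$ with $\vec{\chi}(D[Z])\ge\lceil\vec{\chi}(D)/2\rceil-(q-1)$ — minimality plus Lemma~\ref{lemma:strong} forces $D[Z]$ to be strongly connected and pins its dichromatic number to exactly that value — and \emph{then} finds a directed cycle of length $\ge q-1$ in $D[S\setminus Z]$ and a path of length exactly $q-1$ from that cycle into $Z$, meeting $Z$ only at its terminal vertex $r_2$. The anchor $r_2$ is thus constructed inside the already-chosen strongly connected $Z$, so the problem you ran into never arises, and the second BFS can safely be run from $r_2$ within $D[Z]$. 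If you replace your steps involving $\tilde D$ by this ``choose $Z$ minimal first, then attach the cycle'' ordering, the rest of your argument goes through essentially verbatim.
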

\begin{proof}
By Lemma~\ref{lemma:half} there exists $i \ge 0$ such that $\vec{\chi}(L_i^+(D,x_0)) \ge \left\lceil\frac{1}{2}\vec{\chi}(D)\right\rceil$. Let $S$ be a strongly connected component of $D[L_i^+(D,x_0)]$ with $\vec{\chi}(D[S])=\vec{\chi}(L_i^+(D,x_0)) \ge \left\lceil\frac{1}{2}\vec{\chi}(D)\right\rceil$. 

Further, let $Z \subseteq S$ be chosen inclusion-wise minimal such that $\vec{\chi}(D[Z]) \ge \left\lceil\frac{1}{2}\vec{\chi}(D)\right\rceil-(q-1)$. Then the minimality of $Z$ and Lemma~\ref{lemma:strong} imply that $\vec{\chi}(D[Z])=\left\lceil \frac{1}{2}\vec{\chi}(D)\right\rceil-(q-1)$ and that $D[Z]$ is strongly connected. It follows that $\vec{\chi}(D[S \setminus Z]) \ge \vec{\chi}(D[S])-\vec{\chi}(D[Z]) \ge q-1$. 

We claim that there exists a directed path $P$ in $D[S]$ of length exactly $q-1$ such that $P$ intersects $Z$ only at its endpoint. This holds trivially if $q=1$. If $q=2$ we have $\vec{\chi}(D[S \setminus Z]) \ge 1$ and thus $S \setminus Z \neq \emptyset$. Hence, the strong connectivity of $D[S]$ implies the existence of an arc from $S \setminus Z$ to $Z$ in $D$. Next, suppose $q \ge 3$. Then we may apply Lemma~\ref{lemma:cyclelength} to $D[S \setminus Y]$ with $k=q-1$ and find a directed cycle $C$ in $D[S \setminus Y]$ of length at least $q-1$. Let $R$ be a shortest directed path in $D[S]$ starting in $V(C)$ and ending in $Z$. Let $r_1 \in V(C)$ and $r_2 \in Z$ denote the start- and endpoint of $R$, such that $V(R) \cap V(C)=\{r_1\}$, $V(R) \cap Z=\{r_2\}$. We may now consider the directed path in $D[S]$ obtained by concatenating the segment of $C$ of length $q-2$ ending at $r_1$ with $R$. This path has length at least $q-1$ and intersects $Z$ only at its endpoint $r_2$. Hence we may find a subpath $P$ of $R$ of length exactly $q-1$ ending at $r_2$, as desired.

Moving on, consider the vertex $r_2$ in the strongly connected digraph $D[Z]$ and apply Lemma~\ref{lemma:half} to it. We then find an integer $j \ge 0$ such that $\vec{\chi}(D[L_j^+(r_2,D[Z])]) \ge \frac{1}{2}\vec{\chi}(D[Z]) \ge \frac{1}{4}\vec{\chi}(D)-\frac{q-1}{2}$. We now define $Y:=L_j^+(r_2,D[Z])$ and $I:=[i+j,i+j+(q-1)]$. To conclude the proof, let a vertex $y \in Y$ and a number $\ell\in I$ be given arbitrarily. Write $l=i+j+\alpha$ for some $0 \le \alpha \le q-1$, and let $p(\alpha) \in V(P)$ denote the vertex at distance exactly $\alpha$ from $r_2$ along $P$.  Let $P_1$ be a shortest path from $x_0$ to $p(\alpha)$ in $D$. Since $p(\alpha) \in S \subseteq L_i^+(D,x_0)$, we have $\ell(P_1)=i$. Further, let $P_2$ be a shortest path from $r_2$ to $y$ in $D[Z]$. Then $\ell(P_2)=j$ since $y \in L_j^+(r_2,D[Z])$. Hence the path $Q$ formed as the union of $P_1$, the segment of $P$ from $p(\alpha)$ to $r_2$, and $P_2$ is a directed path of length $i+\alpha+j=\ell$ in $D$ from $x_0$ to $y$. Since $P_1, P_2$ are shortest paths we further we have $V(P_1) \cap S=\{p(\alpha)\}, V(P) \cap Z \subseteq V(R) \cap Z=\{r_2\}, V(P_2) \cap Y=\{y\}$. This implies that $V(Q) \cap Y=\{y\}$, as desired. This concludes the proof.
\end{proof}

We now give the proof of Theorem~\ref{thm:main}. The idea is to combine Corollary~\ref{cor:connect} and Lemma~\ref{lemma:outparity} to enable induction on the number of arcs in $F$. 

\begin{proof}[Proof of Theorem~\ref{thm:main}]
We prove the statement by induction of the number of arcs $a(F)$ in $F$. If $a(F)=0$, then the claim holds true trivially (every digraph of dichromatic number $N:=v(F)$ has at least $v(F)$ vertices and thus contains a copy of $F$). 

Moving on, suppose that $a(F) \ge 1$ and that we have established Theorem~\ref{thm:main} for all digraphs containing strictly fewer arcs. Let  $(r_e,q_e)_{e \in A(F)}$ be a given assignment of integer pairs to the arcs of $F$. We pick an arc $f \in A(F)$ arbitrarily. Let $N' \in \mathbb{N}$ be a number such that every digraph of dichromatic number at least $N'$ contains a subdivision of $F-f$ in which for every arc $e \in A(F)\setminus\{f\}$ the corresponding subdivision path is of length congruent to $r_e$ modulo $q_e$. 

Define $N:=8N'+4(q_f-1)$ and let $D$ be any given digraph with $\vec{\chi}(D) \ge N$. Let a subset $X$ of $V(D)$ and a vertex $x_0 \in X$ be as guaranteed by Corollary~\ref{cor:connect}, applied to $D$. Then $\vec{\chi}(D[X]) \ge \frac{1}{2}\vec{\chi}(D)$, $D[X]$ is strongly connected and every vertex $x \in X$ can be connected to $x_0$ via a directed path intersecting $X$ only at its endpoints. Since $D[X]$ is strongly connected, we may apply Lemma~\ref{lemma:outparity}. We thus find $Y \subseteq X$ with $\vec{\chi}(D[Y]) \ge \frac{1}{4}\vec{\chi}(D[X])-\frac{q_f-1}{2} \ge \frac{1}{8}\vec{\chi}(D)-\frac{q_f-1}{2} \ge N'$ and an interval $I$ of $q_f$ consecutive integers such that the following holds: For every $y \in Y$ and every $\ell \in I$ there is a directed $x_0$ to $y$-path of length $\ell$ in $D[X]$ intersecting $Y$ only in $y$. By inductive assumption, there exists a subdivision of $F-f$ in $D[Y]$ such that every $e \in A(F)\setminus\{f\}$ is replaced by a path of length congruent to $r_e$ modulo $q_e$. Let $y_1, y_2 \in Y$ be the two branch-vertices in this subdivision corresponding to the start- respectively endpoint of the arc $f$ in $F$. We will now show that there exists a directed path $P$ from $y_1$ to $y_2$ in $D$ intersecting $Y$ only at its endpoints, with $\ell(P) \equiv_{q_f} r_f$. Adding this path to the subdivision of $F-f$ then yields a subdivision of $F$ in $D$ with the desired congruence properties. 

To construct $P$, we first connect $y_1$ to $x_0$ via a directed path $P_0$ with $V(P_0) \cap X=\{y_1,x_0\}$ (in particular, $P_0$ is internally disjoint from $Y \subseteq X$). Let $\ell \in I$ be a number chosen such that $\ell \equiv_{q_f} r_f-\ell(P_0)$ (this is possible since $I$ is an interval with $q_f$ elements). Then by our choice of $Y$ there exists a path $Q$ in $D[X]$ from $x_0$ to $y_2$ such that $\ell(Q)=\ell$ and $Q$ is internally disjoint from $Y$. Overall, $P_0 \cup Q$ is a directed $y_1$-$y_2$-path in $D$ which is internally disjoint from $Y$ and satisfies $\ell(P_0 \cup Q)=\ell(P_0)+\ell \equiv_{q_f} \ell(P_0)+(r_f-\ell(P_0))=r_f$. Thus, by adding this path to the subdivision of $F-f$ in $D[Y]$ we find a subdivision of $F$ in $D$ in which every arc $e$ is replaced by a path of length congruent to $r_e$ modulo $q_e$. This proves the inductive claim and concludes the proof of the theorem.
\end{proof}

\end{document}